\documentclass[a4paper,11pt]{amsart}

\usepackage{epsfig}
\usepackage{amsthm,amsfonts}
\usepackage{amssymb,graphicx,color}
\usepackage[all]{xy}
\usepackage{verbatim}
\usepackage{hyperref}
\usepackage{enumitem}

\newtheorem{theorem}{Theorem}[section]
\newtheorem*{theorem*}{Theorem}

\newtheorem{corollary}[theorem]{Corollary}
\newtheorem{proposition}[theorem]{Proposition}
\newtheorem{definition}[theorem]{Definition}

\newtheorem{claim}{Claim}[theorem]

\newtheorem{innercustomthm}{Theorem}
\newenvironment{customthm}[1]
  {\renewcommand\theinnercustomthm{#1}\innercustomthm}
  {\endinnercustomthm}

\newcommand{\C}{\mathbb C}
\newcommand{\R}{\mathbb R}

\newcommand{\N}{\mathbb N}

\begin{document}

\title[Metric-geometric Chow theorem]
{Metric-geometric Chow theorem}

\author[J. E. Sampaio]{Jos\'e Edson Sampaio}

\address{Jos\'e Edson Sampaio: Departamento de Matem\'atica, Universidade Federal do Cear\'a,
	      Rua Campus do Pici, s/n, Bloco 914, Pici, 60440-900, 
	      Fortaleza-CE, Brazil. \newline  
              E-mail: {\tt edsonsampaio@mat.ufc.br}
}

\keywords{Chow theorem, Lipschitz regularity, Algebraicity, Characterizing algebraic sets}

\subjclass[2010]{32S20; 14B05; 32A15; 32S50}
\thanks{
The author was partially supported by CNPq-Brazil grant 303375/2025-6 and by the Serrapilheira Institute (grant number Serra -- R-2110-39576).
}

\begin{abstract}
In 2009, Peterzil and Starchenko proved the following beautiful generalization of Chow's
theorem: An entire complex analytic set $X\subset \mathbb{C}^n$ that is definable in an o-minimal structure on $\mathbb{R}$ must be an algebraic set. This result is known nowadays as the o-minimal Chow theorem.
In this article, we present some geometric and metric versions of Chow's theorem that generalize the o-minimal Chow's theorem. For instance, we prove that if $X\subset \mathbb{C}^n$ is a pure $d$-dimensional entire complex analytic set, then the following statements are equivalent:
\begin{enumerate}
 \item $X$ is a complex algebraic set;
 \item $\mathcal{H}^{2d+1}(C(X,\infty))=0$, where $\mathcal{H}^{k}(A)$ denotes the $k$-dimensional Hausdorff measure of $A$, and $C(X,\infty)$ denotes the tangent cone at infinity of $X$;
 \item $\mathcal{H}^{2d+2}(C_{\mathbb{C}}(X,\infty))=0$, where $C_{\mathbb{C}}(X,\infty)$ denotes the complex tangent cone at infinity of $X$;
 \item $\mathcal{H}^{2d}(Z(X,\infty))=0$, where for $A\subset \mathbb{C}^k$, $Z(A,\infty)=\{[v]\in \mathbb{C}P^{k-1};v\in C_{\mathbb{C}}(A,\infty)\}$;
 \item For any $k\in\{d+1,...,n\}$ and for any projection $\pi\colon \mathbb{C}^{n}\to \mathbb{C}^{k}$ such that $\pi^{-1}(0)\cap C_{\mathbb{C}}(X,\infty)=\{0\}$ and $Y=\pi(X)$, $\mathcal{H}^{2d}(Z(Y,\infty))<\mathcal{H}^{2d}(\mathbb{C}P^{d})$.
\end{enumerate}

\end{abstract}

\maketitle

\section{Introduction}

In 1949, Chow proved in \cite{Chow:1949} the famous theorem, currently known as Chow's Theorem, which states that {\it any closed complex analytic subset of a complex projective space is a complex algebraic subset}. Chow's Theorem has many important consequences and versions and is the basis for applying analytic methods in Algebraic Geometry. 
Nowadays, we know that Chow's Theorem can also be proved using the Remmert–Stein Theorem (see \cite{RemmertS:1953}), which states that {\it if $F$ is an analytic set of dimension less than $k$ in some complex manifold $D$, and $M$ is an analytic subset of $D \setminus F$ with all components of dimension at least $k$, then the closure of $M$ is either analytic or contains $F$}.

More recently, in 2009, Peterzil and Starchenko \cite{PeterzilS:2009} proved the following beautiful generalization of Chow's
theorem: {\it An entire complex analytic set $X\subset \mathbb{C}^n$ that is definable in an o-minimal structure on $\mathbb{R}$ must be an algebraic set}. This result is known nowadays as the o-minimal Chow theorem.

Recently, the author presented in \cite{Sampaio:2023} some implications of the study on Lipschitz Geometry at infinity of complex analytic sets for the studies on algebraicity and rigidity of complex analytic sets.
In particular, it was presented in \cite{Sampaio:2023} the following result: {\it Let $X\subset\C^n$ be a pure $d$-dimensional entire complex analytic subset. Then the following statements are equivalent:
\begin{enumerate}
 \item [(1)] $X$ is a complex algebraic set.
 \item [(2)] $C(X,\infty)$ is a $d$-dimensional complex algebraic set;
 \item [(3)] $X$ is bi-Lipschitz homeomorphic at infinity to a complex algebraic set.
\end{enumerate}}

In \cite{SampaioS:2024}, da Silva and the author of this paper proved that a pure $d$-dimensional entire complex analytic subset is an algebraic set if, and only if, it is lipeomorphic at infinity to a definable set in an o-minimal structure on $\R$.

In this article, we prove other characterizations of the algebraicity of analytic sets besides those presented in \cite{Sampaio:2023}, \cite{SampaioS:2024}, and \cite{PeterzilS:2009}.
In order to state our main result, let us define some objects.

\begin{definition}\label{def:realtangentcone}
Let $X\subset \R^m$ be a subset. We say that $v\in \mathbb{R}^{m}$ is {\bf a tangent vector to $X$ at infinity} if there are a sequence of points $\{x_j\}_{j\in \N}\subset X$ and a sequence of real positive numbers $\{t_j\}_{j\in \N}$ such that $\lim\limits _{j\to +\infty }t_j= +\infty$ and $\lim\limits _{j\to +\infty }\frac{1}{t_j}x_j=v$.
We denote by $C(X,\infty)$ the set of all tangent vectors of $X$ at infinity, and we call it {\bf the tangent cone of $X$ at infinity}.
\end{definition}
\begin{definition}\label{complextangentcone}
Let $X\subset \C^m$ be a subset. 
We denote by $C_{\C}(X,\infty)$ the set of all $v\in \mathbb{C}^{m}$ such that there is a sequence of points $\{x_j\}_{j\in \N}\subset X$ and a sequence of numbers $\{t_j\}_{j\in \N}\subset \C\setminus \{0\}$ satisfying $\lim\limits _{j\to +\infty }\|t_j\|= +\infty$ and $\lim\limits _{j\to +\infty }\frac{1}{t_j}x_j=v$. We call $C_{\C}(X,\infty)$ {\bf the complex tangent cone of $X$ at infinity}.
\end{definition}
For an entire complex analytic subset, we define $X\subset\C^n$ $Z(X,\infty)=\{[v]\in \mathbb{C}P^{n-1};v\in C_{\mathbb{C}}(X,\infty)\}$

Here, $\mathcal{H}^{k}(A)$ denotes the $k$-dimensional Hausdorff measure of $A$, and $\dim_H A$ denotes the Hausdorff dimension of $A$. 

Thus, our main result is the following:
\begin{customthm}{\ref*{chow-type-thm}}
Let $X\subset\C^n$ be a pure $d$-dimensional entire complex analytic subset. Then the following statements are equivalent:
\begin{enumerate}
 \item $X$ is a complex algebraic set;
 \item $\mathcal{H}^{2d+1}(C(X,\infty))=0$;
 \item $\mathcal{H}^{2d+2}(C_{\mathbb{C}}(X,\infty))=0$;
 \item $\mathcal{H}^{2d}(Z(X,\infty))=0$;
 \item For any $k\in\{d+1,...,n\}$ and for any projection $\pi\colon \mathbb{C}^{n}\to \mathbb{C}^{k}$ such that $\pi^{-1}(0)\cap C_{\mathbb{C}}(X,\infty)=\{0\}$ and $Y=\pi(X)$, $\mathcal{H}^{2d}(Z(Y,\infty))<\mathcal{H}^{2d}(\mathbb{C}P^{d})$.
\end{enumerate}
\end{customthm}
We note that the Hausdorff measure in $\mathbb{C}P^{n-1}$ is taken with respect to the Fubini-Study distance. Moreover, all the subsets of $\R^n$ (or $\C^n$) are considered to be equipped with the induced Euclidean metric.

\begin{corollary}
  Let $X\subset\C^n$ be a pure $d$-dimensional entire complex analytic subset. Then the following statements are equivalent:
\begin{enumerate}
 \item $X$ is a complex algebraic set;
 \item $\dim_{H}(C(X,\infty))<{2d+1}$;
 \item $\dim_{H} C_{\mathbb{C}}(X,\infty)<2d+2$;
 \item $\dim_{H} Z(X,\infty)<2d$.
\end{enumerate}  
\end{corollary}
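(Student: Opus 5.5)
The plan is to deduce the Corollary from Theorem~\ref*{chow-type-thm} using only two facts about Hausdorff dimension and measure. First, if $\dim_H A<s$ then $\mathcal{H}^s(A)=0$. Second, for each of the three sets involved we need matching lower and upper bounds on dimension when $X$ is algebraic.

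For $(2)\Rightarrow(1)$, $(3)\Rightarrow(1)$ and $(4)\Rightarrow(1)$ we argue directly.
\begin{itemize}
\item If $\dim_H C(X,\infty)<2d+1$, then $\mathcal{H}^{2d+1}(C(X,\infty))=0$, so item (2) of Theorem~\ref*{chow-type-thm} holds and $X$ is algebraic.
\item If $\dim_H C_{\C}(X,\infty)<2d+2$, then $\mathcal{H}^{2d+2}(C_{\C}(X,\infty))=0$, and item (3) of the Theorem applies.
\item If $\dim_H Z(X,\infty)<2d$, then $\mathcal{H}^{2d}(Z(X,\infty))=0$, and item (4) applies.
\end{itemize}

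For $(1)\Rightarrow(2),(3),(4)$, assume $X$ is a complex algebraic set of pure dimension $d$. The classical fact, recalled in the introduction via \cite{Sampaio:2023}, is that $C(X,\infty)$ is a $d$-dimensional complex algebraic set. Concretely, it is the zero set of the leading forms of the polynomials in the ideal of $X$, that is, the cone over $\overline{X}\cap H_\infty$ in $\mathbb{C}P^n$. The argument then runs in three steps.
\begin{itemize}
\item Since $C(X,\infty)$ is a complex cone, $C_{\C}(X,\infty)=C(X,\infty)$. The inclusion $C(X,\infty)\subset C_{\C}(X,\infty)$ is immediate. Conversely, write $t_j=|t_j|e^{i\theta_j}$ and pass to a subsequence with $e^{i\theta_j}\to e^{i\theta}$; then $v\in C_{\C}(X,\infty)$ forces $e^{i\theta}v\in C(X,\infty)$, hence $v\in C(X,\infty)$.
\item A $d$-dimensional complex algebraic set is a finite union of real $2d$-dimensional submanifolds, namely its smooth strata. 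Hence $\dim_H C(X,\infty)=\dim_H C_{\C}(X,\infty)=2d$, which is less than both $2d+1$ and $2d+2$.
\item $Z(X,\infty)$ is the projectivization of a $d$-dimensional complex cone. It is therefore a projective algebraic set of complex dimension $d-1$, and so has Hausdorff dimension $2d-2<2d$ with respect to the Fubini--Study metric, which is bi-Lipschitz locally equivalent to smooth charts. The case $d=0$ is trivial: $X$ is finite, the cone is $\{0\}$, and $Z(X,\infty)$ is empty.
\end{itemize}

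The only step needing care is the $(1)\Rightarrow$ direction, specifically that the tangent cone at infinity of an algebraic set is an algebraic cone of the same dimension. If this cannot be cited directly, I would obtain it from the equivalence $(1)\Leftrightarrow(2)$ of the result of \cite{Sampaio:2023} quoted in the introduction. Everything else is the elementary relation between Hausdorff dimension and measure.
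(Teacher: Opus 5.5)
Your proof is correct. The reverse implications are the deduction the paper has in mind; it gives no separate proof of the corollary. The point is that $\dim_H A<s$ forces $\mathcal{H}^s(A)=0$, and then items (2)--(4) of Theorem~\ref{chow-type-thm} give algebraicity.

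For the forward implications your route differs from the one the paper points to. The paper's only stated tool is Proposition~\ref{selection_lemma}, which can be used as follows:
\begin{itemize}
\item Since $X$ is semialgebraic, $C(X,\infty)$ is definable with $\dim\bigl(C(X,\infty)\cap\mathbb{S}^{2n-1}\bigr)\le 2d-1$. For definable sets, Hausdorff dimension equals o-minimal dimension.
\item The elementary identity $C_{\C}(X,\infty)=\C\cdot C(X,\infty)$ makes $C_{\C}(X,\infty)$ the image of $\C\times\bigl(C(X,\infty)\cap\mathbb{S}^{2n-1}\bigr)$ under $(\lambda,u)\mapsto\lambda u$. Hence its dimension is at most $2d+1$.
\item $Z(X,\infty)$ is the Lipschitz image of $C(X,\infty)\cap\mathbb{S}^{2n-1}$ under $\mathbb{S}^{2n-1}\to\C P^{n-1}$. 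Hence its Hausdorff dimension is at most $2d-1$.
\end{itemize}
These bounds are cruder than yours but suffice. They also never use that the cone is complex or algebraic, so they work verbatim for any definable entire analytic set.

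You instead import the classical fact, quoted from \cite{Sampaio:2023}, that the cone at infinity of a pure $d$-dimensional algebraic set is the pure $d$-dimensional complex algebraic cone over $\overline{X}\cap H_\infty$. This gives the exact values $2d$, $2d$, $2d-2$ and the clean identity $C_{\C}(X,\infty)=C(X,\infty)$. The price is relying on that external result rather than on the appendix.

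One small point concerns $d=0$. There item (4) reads $\dim_H Z(X,\infty)<0$ with $Z(X,\infty)=\emptyset$, so your ``trivial'' case silently uses the convention $\dim_H\emptyset=-\infty$. State it explicitly. Under the other common convention, $\dim_H\emptyset=0$, the implication $(1)\Rightarrow(4)$ fails at $d=0$; this is a defect of the statement itself, not of your argument.
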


Since for a set $X$ that is definable in an o-minimal structure on $\mathbb{R}$, $\dim_H C(X,\infty)\leq \dim_H X$ (see Proposition \ref{selection_lemma}), we obtain the result proved by Peterzil and Starchenko \cite{PeterzilS:2009}:
\begin{corollary}
An entire complex analytic set $X\subset \mathbb{C}^n$ that is definable in an o-minimal structure on $\mathbb{R}$ must be an algebraic set.
\end{corollary}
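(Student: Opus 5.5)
The plan is to reduce to the pure-dimensional case and then apply the previous Corollary (equivalently, Theorem \ref{chow-type-thm}, item (2)), using Proposition \ref{selection_lemma} to bound the Hausdorff dimension of the tangent cone at infinity.

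\emph{Step 1: reduction to pure dimension.} Let $X\subset\C^n$ be an entire complex analytic set, definable in an o-minimal structure on $\R$. For each $k\in\{0,\dots,n\}$, let $X_k$ be the union of the $k$-dimensional irreducible components of $X$. Each $X_k$ is a pure $k$-dimensional entire analytic set, and $X=\bigcup_k X_k$ is a finite union. I claim that each $X_k$ is definable. The set $\{x\in X;\ \dim_x X=k\}$ of points where the local real dimension of $X$ equals $2k$ is definable, since local dimension at a point is a definable notion in o-minimal structures. Its closure is exactly $X_k$, because the points of $X_k$ not lying in components of higher dimension are dense in $X_k$. Hence $X_k$ is definable. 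Since a finite union of algebraic sets is algebraic, it suffices to show that each $X_k$ is algebraic. From now on, assume $X$ is pure $d$-dimensional.

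\emph{Step 2: dimension count.} A pure $d$-dimensional complex analytic set is a countable (indeed locally finite) union of real submanifolds of real dimension at most $2d$, via its stratification into regular parts. Therefore $\dim_H X= 2d$. Since $X$ is definable, Proposition \ref{selection_lemma} gives
$$\dim_H C(X,\infty)\leq \dim_H X=2d<2d+1.$$
By the previous Corollary, implication (2)$\Rightarrow$(1), $X$ is algebraic. Alternatively, this bound gives $\mathcal{H}^{2d+1}(C(X,\infty))=0$, so Theorem \ref{chow-type-thm}, (2)$\Rightarrow$(1), applies directly.

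The only delicate point is Step 1, namely the definability of the pure-dimensional parts. If one prefers to avoid it, one can instead note the following. Each $X_k$ is contained in $X$, so $C(X_k,\infty)\subset C(X,\infty)$. Moreover, $\dim_H C(X,\infty)\le \dim_H X=2\max\{k;\ X_k\neq\emptyset\}$. This settles the top-dimensional part directly. However, the lower-dimensional parts still need their own definability, so I expect to rely on the o-minimal dimension argument above.
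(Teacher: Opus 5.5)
Your proposal is correct and follows the paper's argument. Proposition \ref{selection_lemma}, together with the fact that o-minimal and Hausdorff dimension agree for definable sets, gives $\dim_H C(X,\infty)\le \dim_H X=2d<2d+1$, so $\mathcal{H}^{2d+1}(C(X,\infty))=0$ and Theorem \ref{chow-type-thm}, (2)$\Rightarrow$(1), applies. Your Step 1 obtains each pure-dimensional part $X_k$ as the closure of the definable set $\{x\in X;\ \dim_x X=k\}$; this is a valid reduction that the paper leaves implicit, even though its theorem is stated only for pure-dimensional sets.
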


We note that one day before this paper appeared on ResearchGate, Nguyen posted the interesting paper \cite{Nguyen:2026} on ResearchGate with the following theorem, using different notation:
\begin{theorem}
Let $X\subset\C^n$ be a pure $d$-dimensional entire complex analytic subset. If $\mathcal{H}^{2d}(Z(X,\infty))=0$, then $X$ is a complex algebraic set.
\end{theorem}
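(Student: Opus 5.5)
The plan is to find a linear subspace $W\subset\C^n$ with $\dim_{\C}W=n-d$ and $W\cap C_{\C}(X,\infty)=\{0\}$. From such a $W$ I get a linear projection $\pi\colon\C^n\to\C^d$ with kernel $W$ that is proper on $X$ and satisfies a linear growth bound. A Rudin--Sadullaev type argument with symmetric functions then gives algebraicity. So the measure hypothesis is used only to produce a generic ``transversal'' plane at infinity; the rest is classical.

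\emph{Step 1 (transversal plane).} Let $\mathcal{G}$ be the Grassmannian of projective $(n-d-1)$-planes in $\mathbb{C}P^{n-1}$, a compact real manifold of real dimension $2d(n-d)$. Consider the incidence set $I=\{(p,P)\in Z(X,\infty)\times\mathcal{G};\ p\in P\}$. For a fixed point $p$, the planes through $p$ form a submanifold of real dimension $2d(n-d-1)$, i.e.\ of codimension $2d$ in $\mathcal{G}$. Cover $\mathbb{C}P^{n-1}$ by finitely many charts in which the incidence correspondence is locally a smooth product of the point coordinate with the fibre coordinates. Since $\mathcal{H}^{2d}(Z(X,\infty))=0$, this gives $\mathcal{H}^{2d(n-d)}(I)=0$, using the product inequality for Hausdorff measures of a null set times a smooth $2d(n-d-1)$-dimensional piece. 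The second projection $I\to\mathcal{G}$ is Lipschitz, so the set of planes meeting $Z(X,\infty)$ is $\mathcal{H}^{2d(n-d)}$-null. Hence almost every $P$ avoids $Z(X,\infty)$, and I take $W$ to be the affine cone over such a $P$. Making this step rigorous is the main obstacle: I need a clean local product structure so that the inequality $\mathcal{H}^{a+b}(A\times B)\le c\,\mathcal{H}^{a}(A)\,\mathcal{H}^{b}_{\text{upper}}(B)$ applies, together with Lipschitz charts for the Fubini--Study metric. Using the standard affine charts of $\mathcal{G}$ and a $C^1$ parametrization of the fibres should suffice.

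\emph{Step 2 (growth estimate).} Let $\pi$ be the linear projection with kernel $W$. I claim there are $C,R>0$ with $\|x\|\le C\|\pi(x)\|$ for all $x\in X$ with $\|x\|\ge R$. Otherwise there are $x_j\in X$ with $\|x_j\|\to\infty$ and $\pi(x_j)/\|x_j\|\to 0$. Passing to a subsequence, $x_j/\|x_j\|\to v$ with $\|v\|=1$ and $\pi(v)=0$. Then $v\in C(X,\infty)\subset C_{\C}(X,\infty)$ and $v\in W$, a contradiction. Consequently $\pi|_X$ is proper, and since $X$ is pure $d$-dimensional, $\pi|_X\colon X\to\C^d$ is a finite branched covering of some degree $m$.

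\emph{Step 3 (algebraicity).} Fix a linear form $\ell$ on $\C^n$. For $y$ outside the branch locus, let $\sigma_i(y)$ be the elementary symmetric functions of $\ell(x_1),\dots,\ell(x_m)$, where $\{x_1,\dots,x_m\}=\pi^{-1}(y)\cap X$. These are holomorphic off the branch locus and locally bounded, so they extend holomorphically to all of $\C^d$ by Riemann extension. By Step 2, $|\sigma_i(y)|\le C'(1+\|y\|)^i$, so Liouville's theorem (Cauchy estimates) shows each $\sigma_i$ is a polynomial. Hence $X$ is contained in the algebraic set $\{P_\ell(x):=\ell(x)^m-\sigma_1(\pi(x))\ell(x)^{m-1}+\cdots\pm\sigma_m(\pi(x))=0\}$. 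Intersecting over finitely many generic $\ell$, I obtain a $d$-dimensional algebraic set $V\supset X$; the fibres of $\pi$ over $V$ are finite because generic $\ell$'s separate points. Then $X$, being pure $d$-dimensional and closed, is a union of irreducible components of $V$. Each such component is algebraic, so $X$ is a complex algebraic set.
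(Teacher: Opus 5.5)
Your proof is correct, but it is organized differently from the paper's. In the paper this statement is the chain (4)$\Rightarrow$(5)$\Rightarrow$(1) of Theorem~\ref{chow-type-thm}. The null-measure hypothesis is used only to show that every admissible projection $Y=\pi(X)$ again has $\mathcal{H}^{2d}(Z(Y,\infty))=0$. An induction on $n$ then removes one ambient dimension at a time, and at each stage a single point of $H_\infty\setminus Z$ suffices. For $k-1>d$ that point exists because $\mathcal{H}^{2d}(\mathbb{C}P^{k-1})=\infty$; in the hypersurface case it exists because $\mathcal{H}^{2d}(Z)<\mathcal{H}^{2d}(\mathbb{C}P^{d})$. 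The hypersurface case is closed by Remmert--Stein plus Chow. Algebraicity is then lifted from $Y$ back to $X$ by Rudin's criterion combined with the linear growth estimate, which is your Step 2.

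You instead choose the whole $(n-d)$-dimensional kernel at once through the incidence count in the Grassmannian. You then prove algebraicity directly by symmetric functions and Liouville, which amounts to reproving the linear-growth case of Rudin's criterion rather than citing it.

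\textbf{What each route buys.} Your route is more self-contained: no induction, no Remmert--Stein, no Rudin. Its price is that Step 1 genuinely needs $\mathcal{H}^{2d}(Z(X,\infty))=0$; with merely finite or small measure the dimension count gives nothing. The paper's one-point-at-a-time scheme only needs some point outside $Z$ at each stage, so it also handles the weaker hypothesis (5) and yields the full list of equivalences. Incidentally, your Step 1 can be done without Grassmannians in the same one-point-at-a-time spirit. For $j=1,\dots,n-d$, choose $p_j$ outside the projectivization of $C_{\C}(X,\infty)+\mathrm{span}(p_1,\dots,p_{j-1})$. That set is $\mathcal{H}^{2d+2j-2}$-null, so $p_j$ exists exactly when $j\le n-d$. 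This is essentially the paper's induction in disguise.

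\textbf{Two small remarks on Step 3.} First, finiteness of the fibres of $V$ over $\C^d$ does not come from the $\ell$'s separating points. It comes from taking $\ell_1,\dots,\ell_{n-d}$ whose restrictions to $W$ form a basis of $W^*$; then each fibre has at most $m^{n-d}$ points. Second, your final sentence relies on the classical fact that the analytic irreducible components of an algebraic set are algebraic, which is worth stating explicitly.
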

This implies that the items (1)-(4) in Theorem \ref{chow-type-thm} are equivalent. Here, we use tools similar to those used in \cite{Nguyen:2026}, but the style, notation, and organization of the paper are different.

\section{Algebraicity of analytic sets}

We have the following characterization of complex algebraic sets:

\begin{theorem}\label{chow-type-thm}
Let $X\subset\C^n$ be a pure $d$-dimensional entire complex analytic subset. Then the following statements are equivalent:
\begin{enumerate}
 \item $X$ is a complex algebraic set;
 \item $\mathcal{H}^{2d+1}(C(X,\infty))=0$;
 \item $\mathcal{H}^{2d+2}(C_{\mathbb{C}}(X,\infty))=0$;
 \item $\mathcal{H}^{2d}(Z(X,\infty))=0$;
 \item For any $k\in\{d+1,...,n\}$ and for any projection $\pi\colon \mathbb{C}^{n}\to \mathbb{C}^{k}$ such that $\pi^{-1}(0)\cap C_{\mathbb{C}}(X,\infty)=\{0\}$ and $Y=\pi(X)$, $\mathcal{H}^{2d}(Z(Y,\infty))<\mathcal{H}^{2d}(\mathbb{C}P^{d})$.
\end{enumerate}
\end{theorem}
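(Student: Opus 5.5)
The plan is to prove $(1)\Rightarrow(5)$, $(1)\Rightarrow(2)$, the chain $(2)\Rightarrow(3)\Rightarrow(4)$, and then close with $(4)\Rightarrow(1)$ and $(5)\Rightarrow(1)$. The last two carry the real content.

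\emph{Easy directions.} If $X$ is algebraic, then by the result of \cite{Sampaio:2023} quoted in the introduction, $C(X,\infty)=C_{\C}(X,\infty)$ is a $d$-dimensional complex algebraic cone. Hence $Z(X,\infty)$ is a projective variety of dimension $d-1$, and all three sets in (2)--(4) have dimension below the stated exponent, so $(1)$ implies $(2)$, $(3)$ and $(4)$.

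For $(1)\Rightarrow(5)$: $Y=\pi(X)$ is algebraic, being the image under a proper projection (properness follows from $\pi^{-1}(0)\cap C_{\C}(X,\infty)=\{0\}$). Also $Z(Y,\infty)$ has complex dimension at most $d-1$, so its $\mathcal H^{2d}$-measure is $0<\mathcal H^{2d}(\C P^d)$.

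For the chain $(2)\Rightarrow(3)\Rightarrow(4)$:
\begin{itemize}
\item \emph{$(2)\Rightarrow(3)$.} Observe that $C_{\C}(X,\infty)=S^1\cdot C(X,\infty)$, the image of $S^1\times C(X,\infty)$ under a locally Lipschitz map. Since $\mathcal H^{2d+1}(C(X,\infty))=0$, we get $\mathcal H^{2d+2}$ of the product equal to $0$, hence of the image.
\item \emph{$(3)\Rightarrow(4)$.} Restrict to the unit sphere, using that the cone is $\R_{>0}$-invariant; this gives $\mathcal H^{2d+1}(C_{\C}\cap S^{2n-1})=0$. Then use the Hopf fibration, a Riemannian submersion with circle fibres, together with the coarea/Eilenberg inequality to get $\mathcal H^{2d}(Z(X,\infty))=0$.
\end{itemize}

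\emph{Main step, $(4)\Rightarrow(1)$.} Since $Z(X,\infty)$ is closed and $\mathcal H^{2d}$-null, a generic linear subspace $L\subset\C P^{n-1}$ of codimension $d$ misses it. This holds because the set of $L$ meeting a set of $\mathcal H^{2d}$-measure zero is null in the Grassmannian, by a Crofton/incidence dimension count.

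Translate this to $\C^n$: pick coordinates with $\pi\colon\C^n\to\C^d$ such that $\pi^{-1}(0)\cap C_{\C}(X,\infty)=\{0\}$. Then there is a cone condition $\|x\|\le C(1+\|\pi(x)\|)$ on $X$, so $\pi|_X$ is proper with finite fibres. Hence $X$ is an analytic branched covering of $\C^d$ of some degree $m$; finiteness of $m$ uses properness.

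Each coordinate $x_j$ then satisfies a monic polynomial whose coefficients are elementary symmetric functions of the values over fibres. These coefficients are entire on $\C^d$ with polynomial growth, hence are polynomials by Liouville. Therefore $X$ is contained in an algebraic set of dimension $d$. Repeating with generic linear combinations of coordinates shows $X$ is a union of components of an algebraic set, hence algebraic.

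The obstacle here is producing the generic $L$ from mere measure-zero information, without assuming any structure on $Z$. I would use a Fubini argument on the incidence correspondence $\{(p,L):p\in L\}$, whose fibres over points $p$ have the right dimension.

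\emph{$(5)\Rightarrow(1)$.} Choose $k=d+1$ and a projection that is proper on $X$. Such a projection exists if $Z(X,\infty)\ne\C P^{n-1}$, which I expect to handle by first passing to a generic projection. Then $Y$ is an analytic hypersurface in $\C^{d+1}$ by Remmert.

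For a non-algebraic hypersurface, I claim $C_{\C}(Y,\infty)=\C^{d+1}$. The reason is that the degree of its branched covering over generic lines is unbounded, and by considering its defining entire function one gets tangent directions in every direction. This would give $Z(Y,\infty)=\C P^d$, contradicting (5); hence $Y$ is algebraic.

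Algebraicity of $Y$ for generic $\pi$ in turn bounds the degree of $X$ over $\C^d$ and yields (1) by the argument above. The delicate part of this step is the claim that $Z(Y,\infty)$ is all of $\C P^d$ rather than just of positive measure. I would prove it via the dichotomy: either $Z(Y,\infty)$ misses some point, and then the $(4)\Rightarrow(1)$ argument with $d$ replaced by $d$ in $\C^{d+1}$ applies, or it is everything.
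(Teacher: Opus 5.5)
Your argument for the equivalence of (1)--(4) is correct, and for the hard direction it takes a different route from the paper. The paper never proves (4)$\Rightarrow$(1) directly. It goes (4)$\Rightarrow$(5)$\Rightarrow$(1), and (5)$\Rightarrow$(1) is an induction on $n$ with three ingredients:
\begin{itemize}
\item it projects from a point of $H_\infty\setminus Z(X,\infty)$;
\item it settles the hypersurface case with Remmert--Stein and Chow;
\item it lifts algebraicity from the projected set to $X$ with Rudin's growth criterion.
\end{itemize}
You instead find a single projection $\pi\colon\C^n\to\C^d$ with $\pi^{-1}(0)\cap C_{\C}(X,\infty)=\{0\}$, using the incidence count on the Grassmannian. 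You then run the symmetric-function/Liouville argument, which is in effect the proof of Rudin's criterion in the linear-growth case. This is more self-contained and avoids the extension theorems. Your (2)$\Rightarrow$(3) via $C_{\C}(X,\infty)=S^1\cdot C(X,\infty)$ is fine, and so is your (3)$\Rightarrow$(4) via the Hopf fibration and Eilenberg's inequality.

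The gap is in (5)$\Rightarrow$(1). You only invoke (5) with $k=d+1$. That instance is vacuous unless some projective $(n-d-2)$-plane in $\mathbb{C}P^{n-1}$ misses $Z(X,\infty)$. Your claim that such a plane exists whenever $Z(X,\infty)\neq\mathbb{C}P^{n-1}$ is false for $n-d\geq 3$, because a single missing point is not enough. For example, take $X=(\mathbb{Z}+i\mathbb{Z})^2\times\C^d\subset\C^{d+2}\subset\C^n$. Then $Z(X,\infty)$ is a projective $(d+1)$-plane, a proper subset of $\mathbb{C}P^{n-1}$, and every projective $(n-d-2)$-plane meets it.

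The missing idea is that the instances $k>d+1$ of (5) are what produce the projection. Taking $k=n$ (so $Y=X$), (5) gives $\mathcal{H}^{2d}(Z(X,\infty))<\infty$. Your incidence count, now in codimension $d+1$, then shows that the $(n-d-2)$-planes meeting $Z(X,\infty)$ form a set of Hausdorff dimension at most $\dim_{\R}G-2$, so a generic one misses it. Alternatively, you can step down one dimension at a time, using (5) at each $k$ together with $\mathcal{H}^{2d}(\mathbb{C}P^{k-1})=\infty$ for $k>d+1$; this is what the paper's induction does.

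Your final step, that algebraicity of $Y$ ``bounds the degree of $X$ over $\C^d$'', is not an argument as stated. What works is the following:
\begin{itemize}
\item (5) at $k=d+1$ gives $Z(Y,\infty)\neq\mathbb{C}P^{d}$.
\item Choose $[q]\notin Z(Y,\infty)$ and let $\rho$ be the projection along $q$.
\item Since $\pi(C_{\C}(X,\infty))\subseteq C_{\C}(Y,\infty)$, you get $(\rho\circ\pi)^{-1}(0)\cap C_{\C}(X,\infty)=\{0\}$.
\item Your (4)$\Rightarrow$(1) argument then applies verbatim to $\rho\circ\pi$.
\end{itemize}
In particular you do not need the detour through ``non-algebraic hypersurfaces have $C_{\C}(Y,\infty)=\C^{d+1}$''. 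That statement is true by your dichotomy, but it is unnecessary here.
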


\begin{proof}
\noindent $(1) \Longrightarrow (2)$.
Assume that $X$ is a complex algebraic set. Note that every complex algebraic set is definable in an o-minimal structure on $\mathbb{R}$. By Proposition \ref{selection_lemma}, we have \(\mathcal{H}^{2d+1}(C(X,\infty))=0\).

\bigskip

\noindent $(2) \Longrightarrow (3)$.
We assume that $\mathcal{H}^{2d+2}(C(X,\infty))=0$.

Let $a\colon \C^n\to \C^n$ be the mapping given by $a(x)=-x$. Let $E=C(X,\infty)\cup a(C(X,\infty))\cup (i\cdot C(X,\infty))$, where $i\cdot C(X,\infty)=\{ix;x\in C(X,\infty)\}$. Thus,
$$
\mathcal{H}^{2d+1}(E)=0.
$$
Consider the mapping $\varphi\colon \R\times \C^n\to \C^n$ given by $\varphi(t,x)=(t+i)x$. We have $\varphi(\R\times E)=C_{\C}(X,\infty)$. Since $\mathcal{H}^{2d+2}(\R\times E)=0$, and $\varphi$ is locally a Lipschitz mapping, we obtain $\mathcal{H}^{2d+2}(C_{\C}(X,\infty))=0$.

\bigskip

\noindent $(3) \Longrightarrow (4)$. Assume $\mathcal{H}^{2d+2}(C_{\C}(X,\infty))=0$. 

Let $\overline{X}$ be the closure of $X$ in $\C P^n$ and $Z=\overline{X} \cap H_{\infty}$, where $H_{\infty}$ is the hyperplane at infinity, i.e., $H_{\infty}=\{(x_0:x_1:....:x_n)\in \C P^n; x_0=0\}$. With the obvious identification $H_{\infty}\cong \C P^{n-1}$, we have that $Z=Z(X,\infty)=\{[v]\in \mathbb{C}P^{n-1};v\in C_{\mathbb{C}}(X,\infty)\setminus\{0\}\}$.
Let $\pi\colon \C^{n+1}\setminus \{0\}\to \C P^n$ be the mapping given by $\pi(x_0,x_1,...,x_n)=(x_0:x_1:....:x_n)$.
Let $\widetilde{X}=\pi^{-1}(\overline{X})\cup \{0\}$. Consider $\widetilde{Z}\subset \C^n$ satisfying $\{0\}\times \widetilde{Z}=\widetilde{X}\cap (\{0\}\times \C^n)=\pi^{-1}(Z)\cup \{0\}$.
Thus, note that $\mathcal{H}^{2d+2}(\widetilde{Z})=0$ if and only if $\mathcal{H}^{2d}(Z)=0$.

\begin{claim}\label{algebraic_cone}
$C_{\C}(X,\infty)=\widetilde{Z}$.
\end{claim}
\begin{proof}[Proof of Claim \ref{algebraic_cone}]  
Since $C_{\C}(X,\infty)$ and $\widetilde{Z}$ are complex cones, i.e., if $v\in C_{\C}(X,\infty)$ (resp. $v\in \widetilde{Z}$) then $t v\in C_{\C}(X,\infty)$ (resp. $t v\in \widetilde{Z}$) for all $t\in \C$, it is enough to show that $C_{\C}(X,\infty)\cap \mathbb{S}^{2n-1}=\widetilde{Z}\cap \mathbb{S}^{2n-1}$.

Let $v\in C_{\C}(X,\infty)$ such that $\|v\|=1$. Then there exist sequences $\{z_j\}_j\subset X$ and $\{\lambda_j\}_j\subset \C\setminus\{0\}$ such that $\lim\limits_{j\to +\infty} z_j=\infty$ and $\lim\limits_{j\to +\infty} \frac{z_j}{\lambda_j}=v$, for some $\lambda\in \C$ such that $|\lambda|=1$. Thus, for each $j$, let $L_j=\pi^{-1}(1:z_j)\cup \{0\}$ and $w_j\in L_j\cap \mathbb{S}^{2n-1}$. Since $(\frac{1}{\|z_j\|}:\frac{z_j}{\|z_j\|})\to (0:v)$ as $j\to +\infty$, we obtain, taking a subsequence, if necessary, that $w_j\to (0,\lambda v)$ for some $\lambda \in \C$ with $|\lambda|=1$. Since $\widetilde{X}$ is a homogeneous complex algebraic set, $(0,v)\in \widetilde{X}$, and thus $v\in \widetilde{Z}$.
This shows that $C_{\C}(X,\infty)\subset \widetilde{Z}$.

Let $v\in \widetilde{Z} $ such that $\|v\|=1$. Since $\{0\}\times \widetilde{Z}=\pi^{-1}(Z)\cup \{0\}$, there exists a sequence $\{(x_{0j},y_j)\}_j\subset  \widetilde{X} \setminus (\{0\}\times \widetilde{Z})$ which converges to $(0,v)$. By multiplying $(x_{0j}:y_j)$ by $\frac{\|x_{0j}\|}{x_{0j}}$, if necessary, we assume that $x_{0j}$ is a positive real number for all $j$. Therefore, $z_j=\frac{1}{x_{0j}}y_j\in X$ for all $j$ and, taking subsequence, if necessary, we can assume that $\{\frac{z_j}{\|z_j\|}\}_j$ converges and, in this case, $\lim \frac{z_{j}}{\|z_{j}\|}=v$, which shows that $v\in C(X,\infty)$.
\end{proof}
Therefore, $\mathcal{H}^{2d+2}(\widetilde{Z})=0$, which implies $\mathcal{H}^{2d}(Z)=0$. Since $Z=Z(X,\infty)$, we obtain $\mathcal{H}^{2d}(Z(X,\infty))=0$.

\bigskip

\noindent $(4) \Longrightarrow (5)$. Assume $\mathcal{H}^{2d}(Z(X,\infty))=0$.
Then, $\mathcal{H}^{2d+2}(C_{\C}(X,\infty))=0$.

Let $\pi\colon \mathbb{C}^{n}\to \mathbb{C}^{d+1}$ be a projection such that $\pi^{-1}(0)\cap C_{\mathbb{C}}(X,\infty)=\{0\}$ and $Y=\pi(X)$.

\begin{claim}\label{claim:proper_map}
 There are constants $C,R>0$ such that $\|z\|\leq C\|\pi(z)\|$ for all $z\in X\setminus B_R(0)$. In particular, $\pi|_X\colon X\to \C^{k}$ is a proper mapping. 
\end{claim}
\begin{proof}[Proof of the \ref{claim:proper_map}]
If this does not happen, then there is a sequence $\{z_j\}_j\subset X$ such that $\lim\|z_j\|=+\infty$ and $\|z_j\|>j\|\pi(z_j)\|$ for all $j$. By taking subsequence, if necessary, we may assume that $\lim\frac{z_j}{\|z_j\|}=z_0$. Then, $z_0\in \pi^{-1}(0)\cap C(X,\infty)\subset \pi^{-1}(0)\cap C_{\C}(X,\infty)$ and this is a contraction, since $\pi^{-1}(0)\cup C_{\C}(X,\infty)=\{0\}$ and $\|z_0\|=1$. Therefore there are such constants $C,R>0$, and thus $\pi|_X\colon X\to \C^{k}$ is a proper mapping.
\end{proof}

Therefore, $Y$ is an entire analytic set and $\pi(C_{\C}(X,\infty))=C_{\C}(Y,\infty)$. Since $\pi$ is Lipschitz, we obtain that $\mathcal{H}^{2d+2}(C_{\C}(Y,\infty))=0$. Then, $\mathcal{H}^{2d}(Z(Y,\infty))=0$.

Since $\mathcal{H}^{2d}(\mathbb{C}P^{d})>0$, we have $\mathcal{H}^{2d}(Z(Y,\infty))<\mathcal{H}^{2d}(\mathbb{C}P^{d})$.

\bigskip

\noindent $(5) \Longrightarrow (1)$. Assume that for any $k\in\{d+1,...,n\}$ and for any projection $\pi\colon \mathbb{C}^{n}\to \mathbb{C}^{k}$ such that $\pi^{-1}(0)\cap C_{\mathbb{C}}(X,\infty)=\{0\}$ and $Y=\pi(X)$, $\mathcal{H}^{2d}(Z(Y,\infty))<\mathcal{H}^{2d}(\mathbb{C}P^{d})$.

If $d=0$, then $X$ is a finite set and, in particular, it is an algebraic set. Indeed, if $X$ is not a finite set, then $X$ is an unbounded set. Then, $C_{\C}(X,\infty)$ contains at least a complex line, and thus $Z(X,\infty)$ contains at least one point. Then $\mathcal{H}^{0}(Z(X,\infty))\geq 1$, which is a contraction with $\mathcal{H}^{2d}(Z(X,\infty))=0$.

Thus, we may assume that $d\geq 1$, which implies that $X$ is an unbounded set.
Moreover, it is enough to consider the case that $X$ is an irreducible set.

Since we are assuming that $d\geq 1$, then $n\geq 2$.

Moreover, if $n-d=1$, then $\mathcal{H}^{2d}(H_{\infty})=\mathcal{H}^{2d}(\mathbb{C}P^{d})>0$. Since $\mathcal{H}^{2d}(Z(X,\infty))<\mathcal{H}^{2d}(\mathbb{C}P^{d})$, we have $Z(X,\infty)\subsetneq H_{\infty}$.
By the Remmert-Stein Theorem (see \cite[Theorem 4.6]{Shiffman:1971}), $\overline{X}$ is an analytic subset of $\C P^n$. By Chow's Theorem, $\overline{X}$ is an algebraic set. Therefore, $X$ is an algebraic set.

In particular, we showed that if $n\leq 2$, then $X$ is an algebraic set.

We may assume that $n\geq 3$ and $n-d\geq 2$.

We are going to proceed by induction on $n$. Assume that for any pure dimensional entire complex analytic subset $Y\subsetneq \C^{n-1}$ such that for any $k\in\{d+1,...,n-1\}$ and for any projection $\tilde\pi\colon \mathbb{C}^{n-1}\to \mathbb{C}^{k}$ such that $\tilde\pi^{-1}(0)\cap C_{\mathbb{C}}(X,\infty)=\{0\}$ and $W=\tilde\pi(Y)$, $\mathcal{H}^{2d}(Z(W,\infty))<\mathcal{H}^{2d}(\mathbb{C}P^{d})$, we have that $Y$ is an algebraic set.

Since $X\subset \C^n$ and $Z=Z(X,\infty)\subsetneq H_{\infty}$.
There is $[p]\in  H_{\infty}\setminus Z$. Let $\pi\colon \C^n\to \C^{n-1}$ be the orthogonal projection such that $\pi^{-1}(0)=[p]$. In particular, $\pi^{-1}(0)\cap C_{\C}(X,\infty)=\{0\}$.

By making a unitary change of coordinates, if necessary, we assume that $\pi$ is the projection onto $n-1$ of the first coordinates.

Let $Y=\pi(X)$. 
Then, by \ref{claim:proper_map}, $Y\subsetneq \C^{n-1}$ is an entire analytic set and $\dim Y=d$. Moreover, $\mathcal{H}^{2d}(Z(Y,\infty))<\mathcal{H}^{2d}(\mathbb{C}P^{d})$.

For any $k\in\{d+1,...,n-1\}$ and for any projection $\tilde\pi\colon \mathbb{C}^{n-1}\to \mathbb{C}^{k}$ such that $\tilde \pi^{-1}(0)\cap C_{\mathbb{C}}(Y,\infty)=\{0\}$ and $W=\tilde\pi(Y)$.

Let $p=\tilde\pi\circ \pi\colon \mathbb{C}^{n}\to \mathbb{C}^{k}$. We have that $p^{-1}(0)\cap C_{\mathbb{C}}(X,\infty)=\{0\}$ and $W=p(X)$.
By the hypotheses, $\mathcal{H}^{2d}(Z(W,\infty))<\mathcal{H}^{2d}(\mathbb{C}P^{d})$. 

By the hypothesis of induction, $Y$ is an algebraic set.

By Rudin's criterion (see \cite{Rudin:1968}), $Y$ is contained, after some unitary change of coordinates, in a domain 
$D=\{(x',x'')\in\C^{n-1}; \|x''\| \leq M(1+ \|x'\|)^s\}$, where $M$ and $s$ are positive constants. However, it follows from the proof of Claim \ref{claim:proper_map} that $X\subset \{(x,t)\in\C^{n}; \|x\| \leq \tilde C(1+ |t|)\}$, for some constant $\tilde C\geq C$.

\begin{eqnarray*}
    \|(x'',t)\| & = & \|x''\|+|t|\\
    & \leq & \|x''\|+\widetilde{C}(1+\|x'\|+M(1+\|x'\|)^s)\\
    & \leq & \overline{C}(1+\|x'\|)^{\widetilde{s}}.
\end{eqnarray*}

Therefore, $X\subset \{(x',x'',t)\in\C^{n}; \|(x'',t)\| \leq \bar{C}(1+ \|x'\|)^{s+1}\}$ for $\bar{C}=\sqrt{2}(\tilde C+M)$. By Rudin's criterion again, $X$ is an algebraic set.

\end{proof}

We obtain the following direct consequence for hypersurfaces:
\begin{corollary}
Let $X\subset\C^n$ be a pure $(n-1)$-dimensional entire complex analytic subset. Then the following statements are equivalent:
\begin{enumerate}
 \item $X$ is a complex algebraic set;
 \item $\mathcal{H}^{2n-1}(C(X,\infty))=0$;
 \item $\mathcal{H}^{2n}(C_{\mathbb{C}}(X,\infty))=0$;
 \item $\mathcal{H}^{2n-2}(Z(X,\infty))=0$;
 \item $\mathcal{H}^{2n-2}(Z(X,\infty))<\mathcal{H}^{2d}(\mathbb{C}P^{d})$.
\end{enumerate}
\end{corollary}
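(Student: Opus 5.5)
This corollary is the special case $d=n-1$ of Theorem \ref{chow-type-thm}, so the plan is to specialize each item of that theorem and check that the lists match. With $d=n-1$ we have $2d+1=2n-1$, $2d+2=2n$ and $2d=2n-2$. Hence items (2), (3) and (4) of the corollary are literally items (2), (3) and (4) of Theorem \ref{chow-type-thm}, and their equivalence with (1) is immediate.

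The only item needing work is (5). In Theorem \ref{chow-type-thm}, item (5) quantifies over $k\in\{d+1,\dots,n\}=\{n\}$ and over projections $\pi\colon\C^n\to\C^n$ with $\pi^{-1}(0)\cap C_{\C}(X,\infty)=\{0\}$. I would read ``projection'' here as a linear map that is the identity after a unitary change of coordinates, which is how the theorem's proof uses it. Under that reading $Y=\pi(X)$ is a unitary image of $X$. Unitary maps preserve the Fubini--Study metric on $\C P^{n-1}$, so $\mathcal{H}^{2n-2}(Z(Y,\infty))=\mathcal{H}^{2n-2}(Z(X,\infty))$. Also $\mathcal{H}^{2d}(\C P^{d})=\mathcal{H}^{2n-2}(\C P^{n-1})$. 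Therefore item (5) of the theorem reduces exactly to $\mathcal{H}^{2n-2}(Z(X,\infty))<\mathcal{H}^{2n-2}(\C P^{n-1})$, which is item (5) of the corollary as written, with $d=n-1$.

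To avoid depending on the meaning of ``projection'', I would also give a direct argument. For (1)$\Rightarrow$(5), item (4) gives $\mathcal{H}^{2n-2}(Z(X,\infty))=0<\mathcal{H}^{2n-2}(\C P^{n-1})$. For (5)$\Rightarrow$(1), I would repeat the codimension-one step in the proof of $(5)\Rightarrow(1)$ of Theorem \ref{chow-type-thm}.
\begin{itemize}
\item Since $Z(X,\infty)$ is the intersection of the closure $\overline{X}\subset\C P^n$ with $H_\infty\cong\C P^{n-1}$, the strict inequality gives $Z(X,\infty)\subsetneq H_\infty$.
\item $\overline{X}\cap H_\infty$ is therefore a proper subset of the irreducible $(n-1)$-dimensional manifold $H_\infty$, and is contained in an analytic set of dimension less than $n-1$. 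Remmert--Stein applies to $X\subset\C P^n\setminus H_\infty$, whose components have dimension $n-1$. Since $\overline{X}$ does not contain $H_\infty$, it is analytic.
\item Chow's theorem then makes $\overline{X}$ algebraic, hence $X$ is algebraic.
\end{itemize}

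The main obstacle is the second bullet: a Hausdorff-measure bound alone does not make $Z(X,\infty)$ analytic. The fix is to note that Remmert--Stein only needs $\overline{X}\not\supset H_\infty$, and this follows from $Z(X,\infty)\neq H_\infty$. This is exactly how the paper's proof handles the case $n-d=1$, so I would cite that argument. Finally, I note that the ``$\mathcal{H}^{2d}(\mathbb{C}P^{d})$'' in item (5) of the corollary is to be read with $d=n-1$.
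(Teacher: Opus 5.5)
Your proposal is correct and is essentially the paper's argument. The paper obtains the corollary directly from Theorem \ref{chow-type-thm} with $d=n-1$, and your direct proof of (5)$\Rightarrow$(1) is exactly the $n-d=1$ step of that theorem's proof: the equal-dimension Remmert--Stein (Thullen) theorem for the irreducible hypersurface $H_\infty$, followed by Chow.

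One thing to fix: drop the claim in your second bullet that $\overline{X}\cap H_\infty$ lies in an analytic set of dimension $<n-1$. A proper closed subset of $H_\infty$ need not lie in such a set, and here this is only known after $\overline{X}$ is shown to be analytic. You do not need it, since the theorem you invoke only requires $\overline{X}\not\supset H_\infty$.
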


\appendix
\section{Characterizing tangent cones of definable sets}\label{sec:appendix}
An important way to view the tangent cone at infinity of definable sets in an o-minimal structure is through the velocity of curves at infinity. 
In order to learn about o-minimal structures, see \cite{Coste:1999} and \cite{Dries:1998}.

\begin{proposition}\label{selection_lemma}
Let $Z\subset \R^n$ be an unbounded definable set in an o-minimal structure on $\R$. Then $C(Z,\infty)$ is definable, $\dim C(Z,\infty)\leq \dim Z$, and 
$$
C(Z,\infty)=\{w\in \R^n; \exists\ \gamma\in C^0((\varepsilon ,+\infty ), Z) \,\,  s.t.\,\,\gamma(t)=tw+o_{\infty }(t)\},
$$ 
where $g(t)=o_{\infty }(t)$ means $\lim\limits _{t\to +\infty }\frac{g(t)}{t}=0$ and $C^0((\varepsilon ,+\infty ), Z)$ is the set of all continuous functions from $(\varepsilon ,+\infty )$ to $Z$. 
\end{proposition}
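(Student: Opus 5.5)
We prove the three assertions of Proposition \ref{selection_lemma} in turn, using only standard o-minimal tools: definable choice, the curve selection lemma and the monotonicity theorem.

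\textbf{Step 1: definability.} Fix the inversion $\iota\colon \R^n\setminus\{0\}\to\R^n\setminus\{0\}$, $\iota(x)=x/\|x\|^2$, which is semialgebraic and hence definable. Consider the definable set
$$
A=\{(w,s)\in \R^n\times (0,+\infty)\,;\ \exists x\in Z \text{ with } \|x\|>1/s \text{ and } \|sx-w\|<s\}.
$$
The intention is that $w\in C(Z,\infty)$ if and only if $(w,0)$ lies in the closure of $A$. Indeed, if $x_j/t_j\to w$ with $t_j\to+\infty$, take $s=1/t_j$; conversely, points of $A$ with $s\to 0$ give such sequences. A tidier equivalent formulation is $C(Z,\infty)=\{w;\ \forall \epsilon>0\ \forall R>0\ \exists x\in Z,\ \exists t>R,\ \|x/t-w\|<\epsilon\}$. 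This is a first-order formula over the structure, so $C(Z,\infty)$ is definable.

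\textbf{Step 2: the curve description.} The inclusion $\supseteq$ is immediate: take $x_j=\gamma(j)$ and $t_j=j$. For $\subseteq$, let $w\in C(Z,\infty)$. The case $w=0$ is handled by any definable curve going to infinity in $Z$ with sublinear growth, for instance $\gamma(t)$ a point of $Z$ with $\|\gamma(t)\|$ comparable to $\sqrt t$; such a point exists by unboundedness together with the fact that $\{\|x\|: x\in Z\}$ contains an interval $(r,+\infty)$, using definable choice. For $w\neq 0$, consider the definable set
$$
B=\{(x,t)\in Z\times(0,+\infty)\,;\ \|x/t-w\|<1/t^{\,\cdot}\}.
$$
It is more convenient to apply curve selection at a point via the inversion. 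The set $D=\{(x/t,1/t);\ x\in Z,\ t>0\}$ is definable and has $(w,0)$ in its closure. Curve selection gives a continuous definable $\beta(\tau)=(u(\tau),\sigma(\tau))\in D$ for $\tau\in(0,\delta)$, tending to $(w,0)$ as $\tau\to 0^+$, with $\sigma(\tau)>0$. By monotonicity, shrinking $\delta$, the map $\sigma$ is strictly monotone, and hence increasing since $\sigma>0$ and $\sigma\to 0$. We reparametrize by $t=1/\sigma(\tau)$, which runs over $(\varepsilon,+\infty)$. Set $\gamma(t)=t\,u(\sigma^{-1}(1/t))$. Then $\gamma(t)\in Z$, because $(u,\sigma)\in D$ means $u/\sigma\in Z$. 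Moreover $\gamma(t)/t\to w$, which is exactly $\gamma(t)=tw+o_\infty(t)$, and $\gamma$ is continuous.

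\textbf{Step 3: dimension.} By Step 1 the map $\Phi\colon Z\times(0,1]\to\R^{n+1}$, $(x,s)\mapsto (sx,s)$, has definable image $D'$ of dimension at most $\dim Z+1$. We have $C(Z,\infty)\times\{0\}\subset \overline{D'}\setminus D'$, since every point of $D'$ has positive last coordinate. The frontier inequality $\dim(\overline{D'}\setminus D')<\dim D'$ then yields $\dim C(Z,\infty)\le \dim Z$.

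The main obstacle is this frontier argument in Step 3. One must check that the slice at $s=0$ is exactly $C(Z,\infty)$; here the restriction $s\le 1$ does not lose points, since only $s\to 0$ matters.
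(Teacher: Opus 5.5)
Your argument is correct, but it is built around a different auxiliary set than the paper's.

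\textbf{The paper's route.} The paper moves the problem to the origin. It applies the inversion $\iota(x)=x/\|x\|^2$ and passes to polar coordinates $Y=\rho^{-1}(\iota(Z\setminus\{0\}))\subset\mathbb{S}^{n-1}\times(0,+\infty)$. It then selects a curve in $\overline{Y}$ starting at $(w/\|w\|,0)$, reparametrizes by the radial coordinate, inverts back, and checks by hand that $\gamma(t)=tw+o_{\infty}(t)$. The case $w=0$ is handled separately, via $\gamma(t^{1/2})$ for a curve with some unit direction $v$. The dimension bound is first proved for the spherical trace $C(Z,\infty)\cap\mathbb{S}^{n-1}\subset\overline{Y}\setminus Y$ and then lifted using that $C(Z,\infty)$ is a cone.

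\textbf{Your route.} You work with the rescaling set $D=\{(sx,s);\ x\in Z,\ s>0\}\subset\R^{n+1}$, whose closure meets $\{s=0\}$ exactly in $C(Z,\infty)\times\{0\}$. That single set does all three jobs. First, it exhibits $C(Z,\infty)$ as a definable slice; the paper leaves this point implicit, and your first-order formula also settles it. Second, curve selection at $(w,0)$ plus monotonicity of the last coordinate gives the parameter $t=1/\sigma$ directly, with no inversion computation. Third, the frontier inequality for $D'$ gives $\dim C(Z,\infty)<\dim Z+1$ at once: the extra coordinate $s$ absorbs the radial direction, so neither the sphere nor the cone structure is needed. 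The paper's route mirrors the classical local argument at a point. Yours is shorter and in fact uniform in $w$, since your curve-selection argument never uses $w\neq 0$.

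\textbf{Slips to clean up.}
\begin{enumerate}
\item The set $A$ in Step 1 is wrong as written. The condition $\|x\|>1/s$ means $\|sx\|>1$, which together with $\|sx-w\|<s$ forces $\|w\|\geq 1$ in the limit. For example, $(0,0)\notin\overline{A}$ although $0\in C(Z,\infty)$. Drop that condition, or rely only on your first-order formula, which is correct.
\item In your separate treatment of $w=0$, definable choice yields a definable but possibly discontinuous selection. You need the monotonicity theorem to get continuity on a tail $(r',+\infty)$. Alternatively, just use the $D$-argument, or, for the statement as written, even a constant curve, since $z_0/t\to 0$.
\item The garbled set $B$ should be deleted.
\end{enumerate}
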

\begin{proof}
Suppose that $w\in\R^n$ is a tangent vector of $Z$ at infinity. Let us consider the semialgebraic mapping $\iota\colon\R^n\setminus\{0\}\to \R^n\setminus\{0\}$ given by $\iota(x)=\frac{x}{\|x\|^2}$ and denote $X=\iota(Z\setminus \{0\})$.  Since $Z$ is an unbounded set, the origin is a non-isolated point of $\overline{X}$. Let $\rho\colon \mathbb{S}^{n-1}\times [0,+\infty )\to \R^n\setminus \{0\}$ be the mapping given by $\rho(x,t)=tx$. We see that $\rho|_{\mathbb{S}^{n-1}\times (0,+\infty )}:\mathbb{S}^{n-1}\times (0,+\infty )\to \R^n\setminus \{0\}$ is a semialgebraic homeomorphism with inverse mapping $\rho^{-1}\colon\R^n\setminus \{0\}\to \mathbb{S}^{n-1}\times (0,+\infty )$ given by $\rho^{-1}(x)=(\frac{x}{\|x\|},\|x\|)$. Therefore, the sets $Y=\rho^{-1}(X)\subset \mathbb{S}^{n-1}\times [0,+\infty )$ and $\overline{Y}$ are definable sets in an o-minimal structure on $\R$. We are going to consider two cases:

\bigskip

\noindent 1) Case $w\not=0$. Since $w$ is a tangent vector of $Z$ at infinity, there are sequences $\{s_k\}_{k\in \N}$ of positive real numbers and $\{z_k\}_{k\in \N}\subset Z$ such that $\lim\limits _{k\to +\infty }\|z_k\|=+\infty $ and $\lim\limits _{k\to +\infty }\frac{1}{s_k}z_k=w$. Thus, for each $k\in \N$, let us define $x_k=\iota(z_k)$. In this case,  $v:=\lim\limits _{k\to \infty } s_k\cdot x_k= \frac{w}{\|w\|^2}$. In particular, $\lim\limits _{k\to \infty } \frac{x_k}{\|x_k\|}=\frac{w}{\|w\|}=\frac{v}{\|v\|}$ and $u=(\frac{v}{\|v\|},0)\in \overline{Y}$. Then, by the Curve Selection Lemma, there exists a continuous curve $\beta:[0,\delta)\to \overline{Y}$ such that $\beta(0)=u$ and $\beta((0,\delta ))\subset Y$. By writing $\beta(t)=(x(t),s(t))$, we obtain that $s:[0,\delta )\to \R$ is a definable and non-constant function such that $s(0)=0$ and $s(t)>0$ if $t\in (0,\delta )$. By the Monotonicity Lemma, one can suppose that $s$ is $C^1$ and  strictly increasing in the domain $(0,\delta)$. Hence, $s\colon [0,\delta/2]\to [0,\delta' ]$ is a homeomorphism, where $\delta' =s(\frac{\delta}{2})$. Let us define $\alpha\colon [0,r)\to \overline{Y}$ by
$$
\alpha(t)=\rho\circ \beta\circ s^{-1}(t\|v\|)=\rho(x(s^{-1}(t\|v\|)),s(s^{-1}(t\|v\|)))=t\|v\|x(s^{-1}(t\|v\|)),
$$
where $r=\min\{\frac{\delta'}{\|v\|},\delta'\}$. Therefore,
$$
\lim\limits _{t\to 0^+}\frac{\alpha(t)}{t}=\lim\limits _{t\to 0^+}\frac{t\|v\|x(s^{-1}(t))}{t}=\lim\limits _{t\to 0^+}\|v\|x(s^{-1}(t))=\|v\|x(0)=v,
$$
and thus $\alpha(t)=tv+o(t)$. Finally, by defining $\gamma\colon(\frac{1}{r},+\infty )\to Z$ in the following way $\gamma(t)=\iota^{-1}(\alpha(\frac{1}{t}))$, we obtain
\begin{eqnarray*}
\gamma(t) &=& \frac{\frac{1}{t}v+o(\frac{1}{t})}{\|\frac{1}{t}v+o(\frac{1}{t})\|^2}=t\textstyle{\frac{v}{\|v\|^2}}+o_{\infty }(t)\\
		  &=& tw+o_{\infty }(t).
\end{eqnarray*}
Since $\gamma$ is a composition of continuous mappings,  $\gamma$ is a continuous mapping as well.

\bigskip

\noindent 2) Case $w=0$. In this case,  let $\{x_k\}_{k\in\N}\subset Z$ be a sequence such that  $\lim\limits _{k\to +\infty }\|x_k\|=+\infty $ (this sequence exists because $Z$ is unbounded). Thus, $\{\frac{x_k}{\|x_k\|}\}_{k\in\N}$ is, up to taking a subsequence, a convergent sequence. Let $v\in  \R^n$ be the limit of this sequence, i.e., $\lim\limits _{k\to \infty }\frac{x_k}{\|x_k\|}=v$. Likewise, as was done in Case 1, one can show that there exists a continuous curve $\gamma\colon (\varepsilon,+\infty )\to Z$ such that $\gamma(t)=tv+o_{\infty }(t)$. Let us define $\widetilde\gamma\colon (\varepsilon^{2}, +\infty )\to Z$ by $\widetilde{\gamma}(t)=\gamma(t^{\frac{1}{2}})$. Thus, we have $\widetilde{\gamma}(t)=o_{\infty }(t)=tw+o_{\infty }(t)$.

\bigskip

Reciprocally, if there exists a continuous curve $\gamma:(\varepsilon,+\infty )\to Z$ such that $\lim\limits _{t\to +\infty }|\gamma(t)|=+\infty$ and $\gamma(t)=tw+o_{\infty }(t)\}$, then for each $k\in \N$ we define $s_k=\varepsilon+k+1$ and $z_k=\gamma(s_k)$. Thus, it is clear that $w$ is a tangent vector of $Z$ at infinity, since $\lim\limits _{k\to +\infty }\|z_k\|=+\infty $ and $\lim\limits _{k\to +\infty }\frac{1}{s_k}z_k=w$.

Therefore, 
$$
C(Z,\infty)=\{w\in \R^n; \exists\ \gamma\in C^0((\varepsilon ,+\infty ), Z) \,\,  s.t.\,\,\gamma(t)=tw+o_{\infty }(t)\}.
$$ 
Moreover, we have also proved that $(C(Z,\infty)\cap \mathbb{S}^{n-1})\times \{0\}\subset \overline{Y}\setminus Y$. Then 
$$
\dim(C(Z,\infty)\cap \mathbb{S}^{n-1})\leq \dim(\overline{Y}\setminus Y)<\dim Y=\dim Z,
$$
which implies $\dim(C(Z,\infty))\leq \dim Z$.
\end{proof}

\noindent{\bf Acknowledgements}. 
The author would like to thank Lev Birbrair, Alexandre Fernandes, Vinicius Prado, Euripedes da Silva, and Joserlan da Silva for their interest in this work.


\end{document}